\newtheorem{thm}{Theorem}
\newtheorem{lemma}{Lemma}
\begin{document}

\title{A family of non-isomorphism results}
\author{Collin Bleak}
\address{ Department of Mathematics\\
University of Nebraska\\ 
Lincoln, NE 68588-0130, USA\\}
\email{cbleak2@math.unl.edu}
\author{Daniel Lanoue}
\address{Department of Mathematics\\
Northeastern University\\
Boston, MA 02115, USA\\
}
\email{lanoue.d@neu.edu}

\keywords{Higher Dimensional R. Thompson Groups nV, germs, Rubin's Theorem}
\subjclass{20E32; 20B22; 20F65; 20F38}

\bibliography{ref}
\bibliographystyle{amsplain}

\date{\today}
\maketitle

\begin{abstract}
We give a short argument showing that if $m$, $n\in\left\{1,2,\ldots\right\}\cup
\left\{\omega\right\}$, then the groups $mV$ and $nV$ are not isomorphic.
This answers a question of Brin.
\end{abstract}

\section{Introduction}

In the paper \cite{BrinHigherV}, Brin introduces groups $nV$, where
$n\in \left\{1,2,\ldots\right\}\cup\left\{\omega\right\}$.  These
groups can be thought of as ``higher dimensional'' analogues of
Thompson's group $V$.  In particular, while $V = 1V$ is a group of
homeomorphisms of the standard, deleted-middle-thirds Cantor set $C$,
the group $nV$ can be thought of as a group of homeomorphisms of
$C^n$.  

The groups $nV$ are all simple (see \cite{BrinTalk}) and finitely
presented (\cite{BleakHennigMatucci}.  Also, they each contain copies
of every finite group.  Because of these properties, it is non-trivial
to detect whether these groups are pairwise isomorphic through the use
of standard algebraic machinery.  On the other hand, by Rubin's Theorem, much of the structure of these groups is encoded in their actions as groups of homeomorphisms.  Thus, the authors of this note turned to dynamical/topological approaches.

In the seminal article \cite{BrinHigherV}, Brin shows that the group
$2V$ is not isomorphic to $V$.  He also asks (in his talk \cite{BrinTalk}) whether $mV$ and $nV$ can be
isomorphic if $m\neq n$.  In this paper, we answer that question.

As Brin demonstrates in \cite{BrinHigherV}, the groups $V$ and $2V$
act on $C$ and $C^2$ in such a way that Rubin's Theorem (see
\cite{Rubin}) applies; any isomorphism $\phi:V \to 2V$ would induce a
unique homeomorphism $\psi:C \to C^2$, so that for $v\in V$,
$\phi(v) = \psi\circ v\circ\psi^{-1}$.  As $C$ and $C^2$ are
homeomorphic, Rubin's Theorem does not directly demonstrate that the
groups $V$ and $2V$ are non-isomorphic.  However, in
\cite{BrinHigherV}, Brin in finds an element in $2V$ which acts with
periodic orbits of length $k$ for arbitrarily large integers $k$.  As
he also demonstrates that $V$ does not contain such an element, he is
able to deduce that $1V = V$ is not isomorphic with $2V$.

Although we also employ Rubin's Theorem, our approach is different
from that used by Brin above (although it owes somewhat to Brin's work
in \cite{BrinChameleon}, where he classifies the automorphism groups
of R. Thompson's groups $F$ and $T$).  Our chief result is stated
below.

\begin{thm}
\label{nonIso}
Let $m$, $n\in\left\{1,2,\ldots\right\}\cup \left\{\omega\right\}$, with $m\neq n$, then $mV$ is not isomorphic to $nV$.
\end{thm}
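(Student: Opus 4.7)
\emph{Proof plan.} The plan is to attach to each point of $C^n$ a local invariant of the $nV$-action whose structure encodes $n$, and then to transport that invariant across any hypothetical isomorphism via Rubin's Theorem. The invariant will be the \emph{germ group} at a fixed point: the quotient of the stabilizer of the point by the subgroup of elements fixing an entire neighborhood of the point. The first move is to suppose $\phi\colon mV \to nV$ is an isomorphism and invoke Rubin's Theorem to obtain a homeomorphism $\psi\colon C^m \to C^n$ realizing $\phi$ by conjugation. Because germ groups are defined purely from the topological action, the germ group $G_p^{mV}$ at $p \in C^m$ will be isomorphic, as an abstract group, to the germ group $G_{\psi(p)}^{nV}$ at $\psi(p) \in C^n$.

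The first main step is the upper bound: for every $q \in C^n$, the germ group $G_q^{nV}$ embeds into $\mathbb{Z}^n$. The plan is to recall that each $v \in nV$ is described by a finite rectangular partition of $C^n$, with $v$ acting on each rectangle as a product of coordinate-wise prefix replacements, each coordinate being scaled by an integer power of $2$. Refining the partition if necessary, any element $v$ fixing $q$ acts on a sufficiently small rectangular neighborhood of $q$ as a single coordinate-wise affine map, so its germ at $q$ is encoded by the $n$-tuple $(k_1,\dots,k_n) \in \mathbb{Z}^n$ of scaling exponents in the $n$ coordinate directions, with composition of germs corresponding to addition in $\mathbb{Z}^n$.

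The second main step is the lower bound: exhibit a point $p \in C^m$ where $G_p^{mV}$ contains a copy of $\mathbb{Z}^m$. The plan is to take $p = (q_1,\dots,q_m)$ with each coordinate $q_i \in C$ eventually periodic. For each $i$ there is a standard element of $1V$ fixing $q_i$ with a nontrivial scaling germ at $q_i$; promoting these to $mV$ (acting nontrivially in only one factor, with the rest of $C^m$ handled by the usual rectangular bookkeeping) yields $m$ pairwise commuting elements whose germs at $p$ generate a free abelian subgroup of rank $m$. Putting the two steps together yields $\mathbb{Z}^m \hookrightarrow G_p^{mV} \cong G_{\psi(p)}^{nV} \hookrightarrow \mathbb{Z}^n$, so $m \le n$; the symmetric argument gives $n \le m$, contradicting $m \ne n$. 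The cases involving $\omega$ are then handled by observing that $\omega V$ has points whose germ groups contain $\mathbb{Z}^k$ for arbitrarily large finite $k$, whereas $kV$ with finite $k$ has germ groups of rank at most $k$.

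The hard part will be the upper bound. One must check that at \emph{every} point of $C^n$, without exception, the germ group embeds into $\mathbb{Z}^n$---including points with no eventually periodic coordinates (where one expects the germ group to be forced trivial) and points lying on the boundaries of some rectangular partition (where one must refine to reduce the local picture to a single coordinate-wise affine piece). In particular, one must rule out the possibility that some element of $nV$ produces a germ that permutes coordinate directions or otherwise escapes the axis-aligned product structure. This ultimately rests on the rigidity of Brin's definition of $nV$, whose generators are themselves axis-aligned, coordinate-wise moves.
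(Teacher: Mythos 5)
Your proposal is correct and follows essentially the same route as the paper: apply Rubin's Theorem to transport the germ group at a point, compute that the germ group of $nV$ at a point is free abelian of rank equal to the number of eventually periodic coordinates (via the coordinate-wise scaling exponents of the local prefix map, which the axis-aligned definition of $nV$ forces), and compare ranks at a point of $C^m$ with all coordinates eventually periodic. The only cosmetic difference is that you argue via separate upper and lower bounds ($\mathbb{Z}^m \hookrightarrow G_p \hookrightarrow \mathbb{Z}^n$) where the paper computes the germ group exactly as $\mathbb{Z}^{|x|}$, and your worry about partition boundaries is vacuous since $n$-rectangles are clopen.
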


\section{Background Requirements}
In this paper, most of the work is in setting up the correct perspective.  Once that is achieved, the proof of Theorem \ref{nonIso} is almost a triviality.

Throughout this section, we will provide the background and set the
stage for that proof.

\subsection{A group of germs}
Suppose $X$ is a topological space, and $H$ is a subgroup of the full
group of homeomorphisms of $X$.  For any $x\in X$, define the set
$Fix_{(H,x)} = \left\{h\in H\,|\, h(x) = x\right\}$.

We place an equivalence relation on $Fix_{(H,x)}$.  Say that $f$,
$g\in Fix_{(H,x)}$ are equivalent if there is a neighborhood $U$ of
$x$ so that $f|_U=g|_U$.  In this case we write $f\sim_x g$ and we
denote the equivalence class of any element $h\in Fix_{(H,x)}$ by
$[h]_x$.  We further denote the set of equivalence classes that result
by $G_{(H,x)}$.  This last set is known as the set of germs of $H$ at
$x$ which fix $x$.

The following is standard. It is also given explicitly as a portion of
Lemma 3.4 in \cite{BrinChameleon}.

\begin{lemma}
Given a topological space $X$, a point $x\in X$, and a group of
homeomorphisms $H$ of $X$, the set $G_{(H,x)}$ forms a group under the
operation $[f]_x*[g]_x = [fg]_x$.
\end{lemma}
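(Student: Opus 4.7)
The plan is to verify the four standard group axioms: well-definedness of the operation $*$, associativity, existence of identity, and existence of inverses. Three of these are essentially automatic once one unwinds the definitions; the only content lies in the well-definedness check, since the germ operation is defined by picking representatives $f,g$ and forming their product in $H$.

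First I would record closure: if $f,g \in Fix_{(H,x)}$ then $fg(x) = f(g(x)) = f(x) = x$, so $fg \in Fix_{(H,x)}$ and the product $[fg]_x$ makes sense as an element of $G_{(H,x)}$.

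The main step, and the only nontrivial one, is to show that $*$ does not depend on the chosen representatives. Suppose $f \sim_x f'$ via a neighborhood $U$ of $x$ on which $f$ and $f'$ agree, and $g \sim_x g'$ via a neighborhood $V$ of $x$ on which $g$ and $g'$ agree. I need to produce a neighborhood $W$ of $x$ on which $fg$ and $f'g'$ agree. The key observation is that $g$ fixes $x$, so by continuity of $g$ there exists an open neighborhood $W \subseteq V$ of $x$ with $g(W) \subseteq U$. For $y \in W$, we then have $g(y) = g'(y) \in U$, and hence $f(g(y)) = f'(g(y)) = f'(g'(y))$. This gives $(fg)|_W = (f'g')|_W$, so $fg \sim_x f'g'$, as required. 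This continuity-at-a-fixed-point argument is the only real obstacle, and it is short.

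The remaining axioms come for free. Associativity is inherited directly from the associativity of composition in $H$: $([f]_x * [g]_x) * [h]_x = [(fg)h]_x = [f(gh)]_x = [f]_x * ([g]_x * [h]_x)$. The identity is $[\mathrm{id}_X]_x$, since $\mathrm{id}_X \in Fix_{(H,x)}$ and $f \circ \mathrm{id}_X = \mathrm{id}_X \circ f = f$ on all of $X$, hence certainly on any neighborhood of $x$. For inverses, note that if $f \in Fix_{(H,x)}$ then $f^{-1}(x) = x$, so $f^{-1} \in Fix_{(H,x)}$; and $[f]_x * [f^{-1}]_x = [ff^{-1}]_x = [\mathrm{id}_X]_x$, and similarly on the other side. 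This completes the verification that $(G_{(H,x)}, *)$ is a group.
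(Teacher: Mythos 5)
Your proof is correct. The paper itself does not prove this lemma---it simply cites it as standard (a portion of Lemma 3.4 of Brin's chameleon paper)---so your writeup supplies the argument the paper leaves implicit, and you have correctly identified the only point with any content: well-definedness of the product, which you handle by using continuity of $g$ at the fixed point $x$ to obtain a neighborhood $W\subseteq V$ with $g(W)\subseteq U$, so that $fg$ and $f'g'$ agree on $W$. The remaining verifications (closure, associativity inherited from composition, identity $[\mathrm{id}_X]_x$, inverse $[f^{-1}]_x$) are exactly as routine as you say, so nothing is missing.
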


\subsection{$C^n$, $n$-rectangles, and the groups $nV$}
There is a well-known correspondence between finite binary strings,
and subsets of the Cantor set $C$.  We interpret a string as inductive
choices of halves.  For instance, the string ``011'' would means take
the left half of the Cantor set (throwing out the right half), then take
the right half of what remains, and finally pass to the right half of
that.  The diagram below illustrates this notation.

\begin{center}
\psfrag{011}[c]{$011$}
\includegraphics[height=125pt,width = 200 pt]{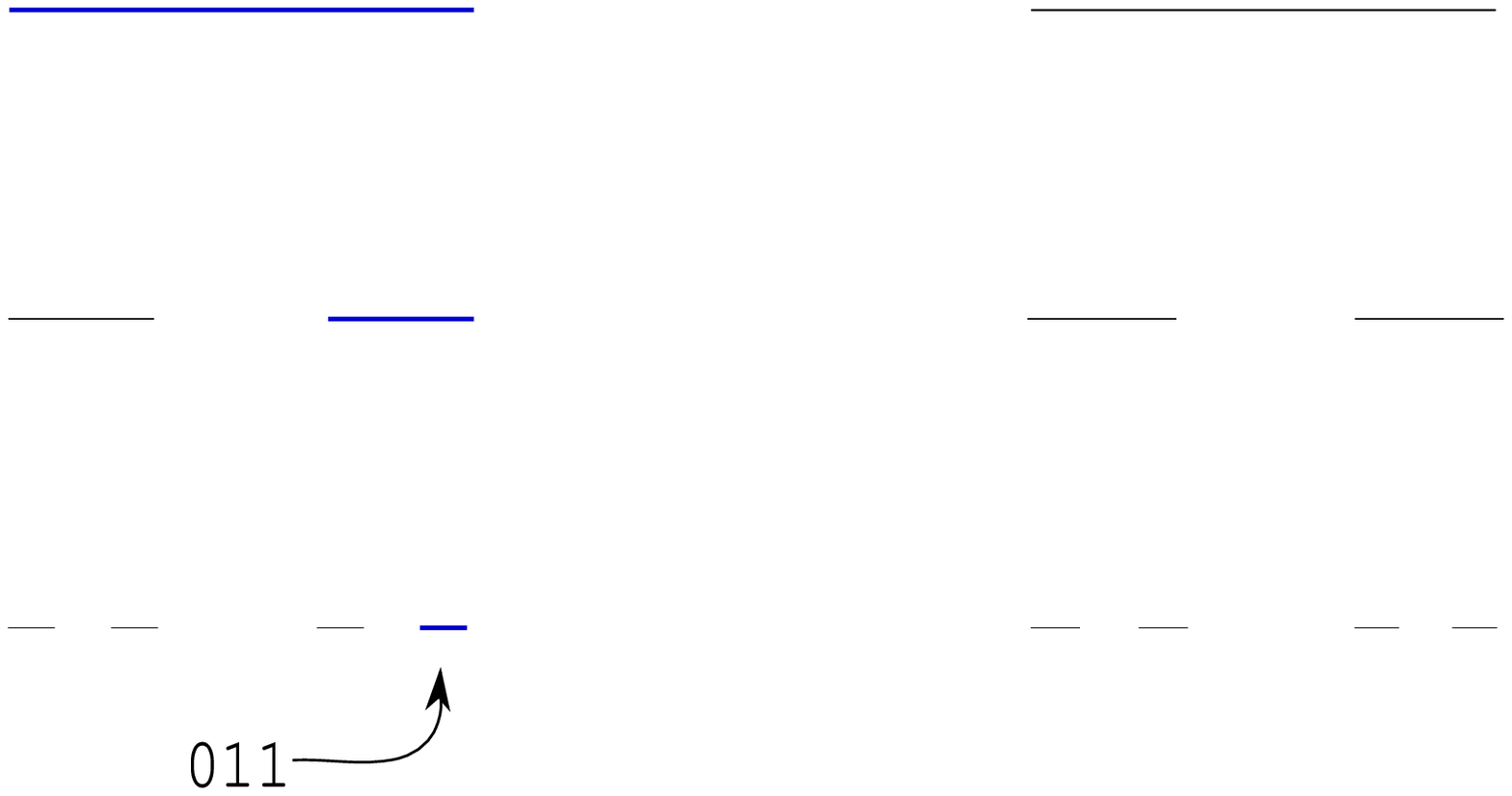}
\end{center}

If we pass to the limit and consider infinite binary strings, we
obtain the standard bijection from $2^N$ to the Cantor set.  That is, we abuse notation by having
an element $s$ of the Cantor set correspond to a map $s:N\to
\left\{0,1\right\}$. In this notation, we would denote $s(0)$ by
$s_0$, so that, by an abuse of notation, we can write $s$ as an
infinite string, that is $s = s_0s_1\ldots$\,. In this case, we can
define a \emph{prefix of $s$} as any finite substring beginning with
$s_0$.  We will call an infinite binary string $w$ an \emph{infinite
tail of $s$} if $s = Pw$ for some finite prefix $P$ of $s$.

  Elements $s, t \in 2^{N}$ are near to each other when they have long
  common prefixes. This induces the standard topology of the Cantor
  set.  In this description, a point $z$ of the Cantor set will be
  rational if and only if it corresponds to an infinite string of the
  form $P\overline{w} = Pww\ldots$, where $P$ is a prefix string and
  $w$ is some non-empty finite string.

We will now fix $n$ as a positive integer.  Let us first define a
special class of subsets of $C^n$.  $R$ is an \emph{$n$-rectangle in
$C^n$} if there is a collection of finite binary strings $P_0$, $P_1$,
$\dots$, $P_{n-1}$ so that\\ \\$R = \left\{z\in C^n\,|\, z =
(x_0,x_1,x_2,\ldots,x_{n-1}) \textrm{ and } x_i = P_iz_i \textrm{
where } z_i\in 2^N\right\}.$\\ \\In this case, by an abuse of notation,
we will say $R=(P_0, P_1,\ldots,P_{n-1})$.

We now define a special class of maps, $n$-rectangle maps, as follows.
Suppose $D = (P_0,P_1,\ldots,P_{n-1})$ and $R=(Q_0,Q_1,\dots,
Q_{n-1})$ are $n$-rectangles.  We define the \emph{$n$-rectangle map}
$\tau_{(D,R)}:D\to R$ by the rule $z\mapsto z'$ where the $i$'th
coordinate $P_iz_i$ of $z$ determines the $i$'th coordinate $Q_iz_i$
of $z'$. We may also refer to these as prefix maps.  

We note that a prefix map $\tau_{(D,R)}$ (as above) scales dimension
$i$ in accord with the length of $P_i$ and the length of $Q_i$.  For
instance, if $P_i = 1$ and $Q_i = 011$, then in dimension $i$, the map
will take a half of the Cantor set and map it affinely over an eighth of
the Cantor set.  In particular, any such map will have a scaling
factor of $2^n$ for $n\in Z$.  (In terms of metric scaling, perceiving
the Cantor set as the standard deleted-middle-thirds subset of the interval,
the scaling factors would of course be $3^n$.  We will use the $2^n$
point of view in the remainder of this note.)

We can now define a \emph{pattern} to be a partition of $C^n$ into a
finite collection of $n$-rectangles.  An element $f$ of $nV$ now
corresponds to a homeomorphism from $C^n$ to $C^n$ for which there is
an integer $k$, a domain pattern $D$, and a range pattern $R$, each
pattern with $k$ rectangles, so that $f$ can be realized as the union
of $k$ disjoint $n$-rectangle maps, each carrying a $n$-rectangle of
$D$ to a $n$-rectangle of $R$. It should be noted that two different pairs
of partitions can correspond to the same map. The diagram below demonstrates a
typical such map in $2V$.

\begin{center}
\psfrag{1}[c]{$\,\,1$}
\psfrag{2}[c]{$\,\,2$}
\psfrag{3}[c]{$\,\,3$}
\includegraphics[height=140pt,width = 200 pt]{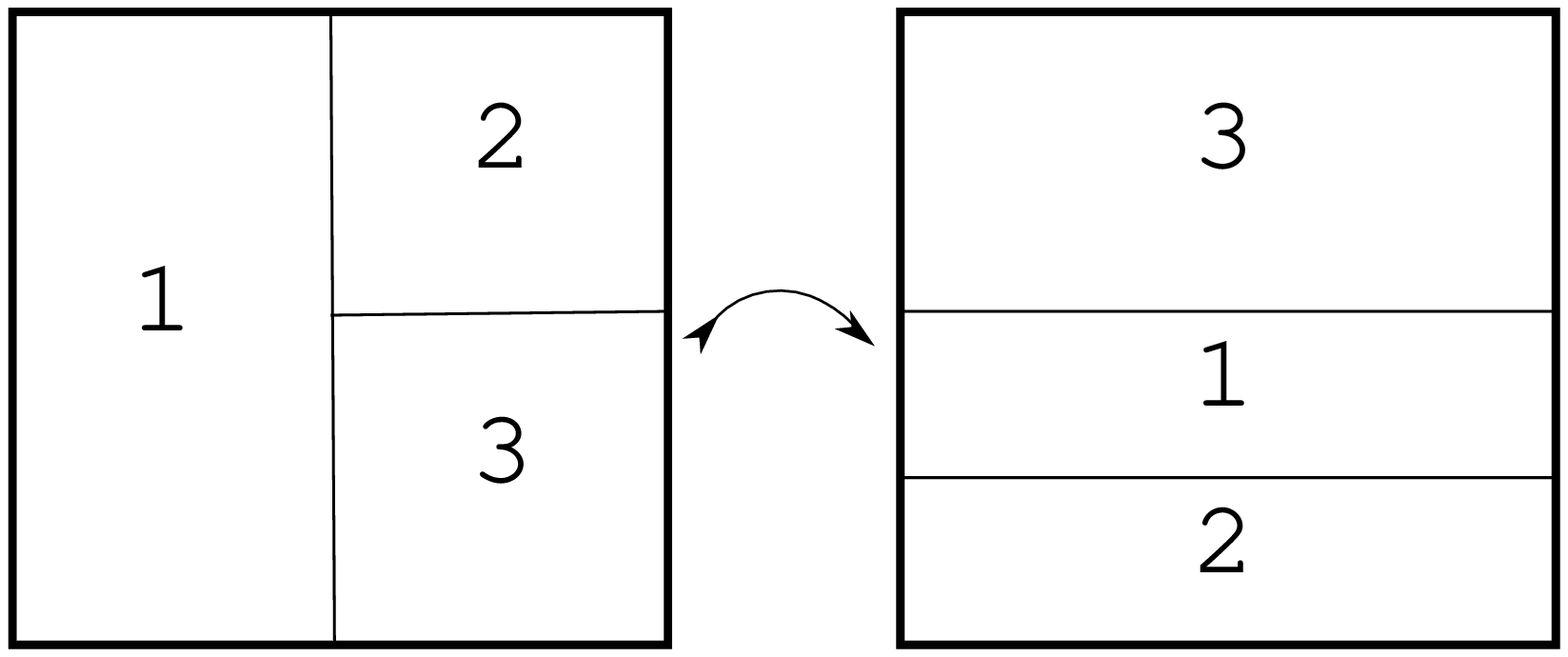}
\end{center}

In the case of $\omega V$, the $\omega$-rectangles are only allowed to
restrict the domain in finitely many dimensions.  That is, $R =
(P_0,P_1,P_2,\ldots)$ is an $\omega$ rectangle if and only if only
finitely many of the $P_i$ are non-empty strings.  Thus, $\omega V$ can be
thought of as a direct union of the $nV$ groups for finite $n$.

\subsection{Rubin's Theorem and some groups of germs}
In order to state Theorem \ref{RubinThm} below, we need to give a
further definition.  If $X$ is a topological space and and $F$ is a
subgroup of the group of homeomorphisms of $X$, then we will say that
$F$ is \emph{locally dense} if and only if for any $x\in X$ and open
neighborhood $U$ of $x$ the set
\[
\left\{f(x)|f\in F, f|_{(X-U)} = 1|_{(X-U)}\right\} 
\]
 has closure containing an open set.

The following is the statement of Rubin's Theorem, as
given by Brin as Theorem 2 in \cite{BrinHigherV}.  It is a
modification of Rubin's statement Theorem 3.1 in \cite{Rubin}, which
statement appears to contain a minor technical error.

\begin{thm}[Rubin]
\label{RubinThm} 
Let $X$ and $Y$ be locally compact, Hausdorff topological spaces
without isolated points, let H($X$) and H($Y$) be the automorphism
groups of X and Y, respectively, and let $G \subseteq$ H($X$) and $H
\subseteq$ H($Y$) be subgroups. If $G$ and $H$ are isomorphic and are
both locally dense, then for each isomorphism $\phi:G \to H$ there is
a unique homemorphism $\psi:X \to Y$ so that for each $g \in G$, we
have $\phi(g) = \psi g \psi^{-1}$.
\end{thm}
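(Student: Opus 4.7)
The plan is to reconstruct the space $X$ (and simultaneously $Y$) from purely group-theoretic data inside $G$ (respectively $H$), in such a way that the isomorphism $\phi\colon G\to H$ automatically transports the reconstruction of $X$ onto that of $Y$. The homeomorphism $\psi$ will then fall out of $\phi$. The dynamical handle available throughout is the support of a group element: elements with disjoint supports commute, so commutation patterns in $G$ encode geometric information about $X$, and local density provides enough elements with arbitrarily small supports to make that encoding faithful.

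The concrete steps are as follows. First, for each open $U\subseteq X$, introduce the rigid stabilizer
\[
G_U = \bigl\{g\in G : g|_{X\setminus U} = \mathrm{id}\bigr\},
\]
which is non-trivial whenever $U$ is non-empty by local density. Second, characterize the collection of such subgroups purely in terms of the abstract group structure of $G$ via iterated centralizers and commutator calculations, so that a property like ``$N$ is the rigid stabilizer of some regular open set'' is expressible without reference to $X$; this ensures $\phi$ sends rigid stabilizers in $G$ to rigid stabilizers in $H$. Third, recover points of $X$ as maximal filters of rigid stabilizers: the filter consisting of all $G_U$ with $x\in U$ determines and is determined by $x$, using the Hausdorff and no-isolated-points hypotheses to obtain injectivity and non-degeneracy, and local compactness to ensure the filters converge to genuine points. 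Fourth, define $\psi\colon X\to Y$ by matching these filters under $\phi$, and recover the topology on each side from the same filter data so that $\psi$ and $\psi^{-1}$ are continuous. Finally, verify the conjugation identity $\phi(g)=\psi g\psi^{-1}$ by checking that both sides send the filter associated with $x$ to the filter associated with $g(x)$.

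The main obstacle is the second step: giving a purely algebraic characterization of ``rigid stabilizer of an open set'' using only the abstract group $G$. This is the technical heart of Rubin's argument, and it is where local compactness and local density really enter: one needs enough small elements, with controlled supports, so that double-centralizer and commutator computations inside $G$ faithfully mirror intersection, disjointness, and inclusion of regular open sets in $X$. Uniqueness of $\psi$ is by contrast straightforward: two solutions $\psi_1,\psi_2$ would give a homeomorphism $\psi_1^{-1}\psi_2$ commuting with every element of $G$, and a locally dense group acting on a Hausdorff space without isolated points has trivial centralizer in the full homeomorphism group, so $\psi_1=\psi_2$.
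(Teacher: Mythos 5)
This theorem is quoted in the paper as a black box, with citations to Rubin's original article and to Brin's corrected restatement; the paper contains no proof of it, so there is nothing internal to compare your argument against. Judged on its own terms, your proposal is an accurate high-level description of how Rubin's reconstruction argument actually runs (rigid stabilizers $G_U$, an algebraic characterization of them inside the abstract group, points recovered as suitable filters/ultrafilters on the algebra of regular open sets, and transport of all this data along $\phi$), and your uniqueness argument is essentially complete and correct: if $\psi_1,\psi_2$ both intertwine $\phi$, then $h=\psi_2^{-1}\psi_1$ centralizes $G$ in $\mathrm{Homeo}(X)$; if $h(x)\neq x$, separate $x$ and $h(x)$ by an open $U$ not containing $h(x)$, use local density and the absence of isolated points to find $g$ supported in $U$ with $g(x)\neq x$, and derive $h(g(x))=g(h(x))=h(x)$, a contradiction.

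However, the existence half is not a proof but a plan, and you say so yourself: the second step, characterizing ``$N$ is the rigid stabilizer of a regular open set'' purely in terms of the abstract group structure of $G$, is left entirely undone, and that step \emph{is} Rubin's theorem --- it occupies the bulk of his paper and is where local density, local compactness, and the Hausdorff/no-isolated-points hypotheses are actually consumed. Without it you cannot conclude that $\phi$ carries rigid stabilizers to rigid stabilizers, and the whole construction of $\psi$ collapses. There are also smaller elisions (e.g.\ why the filter of rigid stabilizers containing a given point actually converges to a unique point of $X$, which needs the regular-open-set machinery, not just local compactness). So the proposal is a faithful roadmap of the standard proof, but as submitted it has a genuine gap at its central step; to be acceptable here it would be more honest to do what the paper does and cite the result, or else carry out the double-centralizer analysis in full.
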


If we combine Rubin's Theorem with our previous work on the group of
germs, we get a lemma which appears to be a very mild extension of
Lemma 3.4 from \cite{BrinChameleon}.
\begin{lemma}
\label{inducedGermIso}
Suppose $X$ and $Y$ are locally compact, Hausdorff topological spaces
without isolated points, and that $G$ and $H$ are respectively
subgroups of the homeomorphism groups of $X$ and $Y$, so that $G$ and
$H$ are both locally dense.  Suppose further that $\phi:G\to H$ is an
isomorphism, and that $\psi$ is the homeomorphism induced by Rubin's
Theorem.  If $x\in X$ and $y\in Y$ so that $\psi(x) = y$, then $\psi$
induces an isomorphism $\overline{\psi}:G_{(G,x)}\to G_{(H,y)}$.
\end{lemma}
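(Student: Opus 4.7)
The plan is to define $\overline{\psi}$ via conjugation by $\psi$ (which is, by Rubin's Theorem, the same as applying $\phi$), verify that this descends to the germ quotient, and then invoke symmetry to get an inverse. First I would check that $\phi$ restricts to a bijection $Fix_{(G,x)} \to Fix_{(H,y)}$: if $g(x) = x$, then $\phi(g)(y) = \psi g \psi^{-1}(\psi(x)) = \psi(g(x)) = \psi(x) = y$, and the same argument applied to $\phi^{-1}$ and $\psi^{-1}$ (which is the homeomorphism induced by $\phi^{-1}$) gives the reverse inclusion.

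Next I would verify that $\phi$ respects the germ equivalence at $x$. Suppose $f \sim_x g$, so there is a neighborhood $U$ of $x$ with $f|_U = g|_U$. Since $\psi$ is a homeomorphism, $V := \psi(U)$ is a neighborhood of $y$, and for any $w \in V$ we have $\psi^{-1}(w) \in U$, so
\[
\phi(f)(w) = \psi f \psi^{-1}(w) = \psi g \psi^{-1}(w) = \phi(g)(w).
\]
Thus $\phi(f) \sim_y \phi(g)$, and the rule $[f]_x \mapsto [\phi(f)]_y$ gives a well-defined map $\overline{\psi}: G_{(G,x)} \to G_{(H,y)}$.

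That $\overline{\psi}$ is a homomorphism is immediate from $\phi$ being a homomorphism together with the definition of the germ product: $\overline{\psi}([f]_x * [g]_x) = [\phi(fg)]_y = [\phi(f)]_y * [\phi(g)]_y$. Finally, applying exactly the same construction to the isomorphism $\phi^{-1}: H \to G$ and the induced homeomorphism $\psi^{-1}$ (which satisfies $\psi^{-1}(y) = x$) yields a map $G_{(H,y)} \to G_{(G,x)}$ that is an inverse to $\overline{\psi}$ at the level of germs, so $\overline{\psi}$ is an isomorphism.

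There is no substantive obstacle here; the only thing one must be careful about is the well-definedness step, which rests on the fact that $\psi$ is a genuine homeomorphism (and hence carries neighborhoods of $x$ to neighborhoods of $y$), and it is precisely Rubin's Theorem that supplies this $\psi$.
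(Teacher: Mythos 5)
Your proposal is correct and follows essentially the same route as the paper: define $\overline{\psi}$ by conjugation, verify well-definedness via $\psi(U)$ being a neighborhood of $y$, check the homomorphism property, and obtain the inverse by applying the same construction to $\phi^{-1}$ and $\psi^{-1}$. The only addition is your explicit check that conjugation carries $Fix_{(G,x)}$ into $Fix_{(H,y)}$, which the paper leaves implicit.
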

\begin{proof}
This is a straightforward exercise in calculation.

For $[v]_x\in G_{(G,x)}$, define $\overline{\psi}([v]_x) = [\psi v \psi^{-1}]_y$.

We first show that  $\overline{\psi}$ is well defined.
Let $f \sim_x g$ with $f, g \in Fix_{(G,x)}$, so that $f|_U = g|_U$ with $U$ an open neighborhood of $x$. 
Then $N = \psi(U)$ is an open neighborhood of $y = \psi(x)$ since $\psi$ is a homeomorphism.
 For $z \in N$, $\psi^{-1}(z) \in U$, so $\psi f \psi^{-1}(z) = \psi g \psi^{-1}(z)$. Thus 
$[\psi f \psi^{-1}]_y = [\psi g \psi^{-1}]_y$.

Now we show that $\overline{\psi}$ is a homomorphism.
We have
\[
\overline{\psi}([f]_x[g]_x) = \overline{\psi}([fg]_x) = [\psi f g \psi^{-1}]_y = [\psi f \psi^{-1} \psi g 
\psi^{-1}]_y 
\]
\[= [\psi f \psi^{-1}]_y [\psi g \psi^{-1}]_y = \overline{\psi}([f]_x) \overline{\psi}([g]_x).
\]
 
Finally, we show that $\overline{\psi}$ is a bijection.  Since
$\psi^{-1}$ conjugates $H$ to $G$, we also have an induced map
$\overline{\psi^{-1}}$.  Now, direct calculation as above shows that
$\overline{\psi^{-1}}\circ\overline{\psi}$ and
$\overline{\psi}\circ\overline{\psi^{-1}}$ are the identity maps on
$G_{(G,x)}$ and $G_{(H,y)}$ respectively.

\end{proof}

\section{Conclusion}
We are now in a position to prove Theorem \ref{nonIso}. We describe
our results using notation for some $n < \omega$ and generally leave to the
reader the extension to $n = \omega$.

\subsection{Some germs of $nV$}
Let us calculate the group $G_{(nV,x)}$, for $x \in C^n$.

For $x = (x_1,\ldots\, ,x_n) \in C^n$, let $|x|$ denote the cardinality of the set $\{i : x_i$ rational$\}$.

We now observe the following.

\begin{lemma}
For $x \in C^n$, $G_{(nV,x)} \cong Z^{|x|}$.
\end{lemma}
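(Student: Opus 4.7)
The plan is to exhibit a concrete isomorphism $\Phi: G_{(nV,x)} \to Z^{|x|}$ whose coordinates record a scaling invariant of a germ in each rational coordinate of $x$. Any element $f \in nV$ with $f(x)=x$ agrees on a neighborhood of $x$ with a single prefix map $\tau_{(D,R)}$, where $D=(P_0,\ldots,P_{n-1})$ and $R=(Q_0,\ldots,Q_{n-1})$ are $n$-rectangles both containing $x$. Writing $x_i = P_i y_i$, the condition $f(x)=x$ becomes $Q_i y_i = P_i y_i$ for each coordinate $i$.

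First I would analyze this condition dimension by dimension. Let $s_i$ be the word with $Q_i = P_i s_i$ or $P_i = Q_i s_i$; the equation forces $s_i y_i = y_i$, and hence forces $y_i$ to be eventually periodic, which is to say $x_i$ is rational. So in any irrational coordinate the local prefix map must satisfy $Q_i = P_i$ and the germ is locally trivial in that dimension. In each rational coordinate, write the tail as $y_i = \overline{w_i}$ where $w_i$ is the primitive period of length $p_i$. A short combinatorial argument, using primitivity of $w_i$, shows that the shift $k_i := |Q_i|-|P_i|$ must lie in $p_i Z$, and conversely every multiple of $p_i$ is realized by some allowed choice of rectangles.

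Next I would define $\Phi([f]_x) = (k_{i_1}/p_{i_1}, \ldots, k_{i_{|x|}}/p_{i_{|x|}})$, where $i_1, \ldots, i_{|x|}$ enumerate the rational coordinates of $x$. Well-definedness and the homomorphism property follow from direct calculation: passing from $D$ to a finer sub-rectangle containing $x$ preserves each $k_i$, and composing two prefix maps both fixing $x$ adds shifts coordinate by coordinate. Injectivity is immediate, since if every $k_i$ vanishes then $Q_i = P_i$ for each $i$ and $f$ is the identity on the neighborhood $D$ of $x$. For surjectivity, given integers $(a_j)$ I would, for each rational index $i_j$, construct an element of $nV$ whose local action at $x$ is the identity in every dimension except $i_j$, in which it implements the prefix swap prepending (or removing) $a_j$ copies of $w_{i_j}$; extending such a prefix map to a global element of $nV$ is routine, since the complements of the domain and range rectangles partition into finitely many $n$-rectangles that can be matched arbitrarily.

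The main obstacle I expect is the combinatorial verification that the admissible shifts in dimension $i$ form exactly the subgroup $p_i Z$, which rests on the primitivity of $w_i$ and is the one place a real computation is required. The extension to $\omega V$ is then straightforward: every element of $\omega V$ already lies in some finite $nV$ and acts non-trivially in only finitely many coordinates, so the same analysis yields $G_{(\omega V, x)}$ as the direct sum of one copy of $Z$ for each rational coordinate of $x$.
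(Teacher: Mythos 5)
Your proposal is correct and follows essentially the same route as the paper: restrict a germ to the unique rectangle map defined near $x$, observe that irrational coordinates force equal prefixes, and send the germ to the tuple of scaling exponents (prefix-length shifts measured in periods $p_i$) in the rational coordinates, then verify this is an isomorphism onto $Z^{|x|}$. If anything, you are more careful than the paper's sketch, since you allow the prefixes $P_i,Q_i$ to end mid-period and isolate the Fine--Wilf-type primitivity argument that the paper glosses over with $P_{(a,m)}=A_m w^{s_m}$.
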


\begin{proof} Let $x = (x_0, x_1, \ldots, x_{n-1})$, with $|x| = k$ and assume without meaningful loss of generality that $x_0, .. x_{k-1}$ are rational.
In particular, let us write $x_i= A_i\overline{w_i}$ for each index
$i$ with $0\leq i \leq k-1$, where $A_i$ is the shortest prefix so
that $x_i$ can be written in this fashion, and where $w_i$ is the
shortest word so that an infinite tail of $x_i$ is of the form
$\overline{w_i}$. Suppose $f \in nV$ with $f(x) = x$, and let $j$ be
the minimal integer so that $f$ admits a decomposition as a union of
$j$ disjoint $n$-rectangle maps, $\left\{f_i:D_i\to R_i\,|\, 1\leq
i\leq j\right\}$, with $f = f_1\cup f_2\ldots \cup f_j$.

Assume further that $a$ is the index so that $x\in D_a$.  By our
earlier assumptions, $x\in R_a$ as well.  We can now restrict our
attention to $\tau_{(D_a,Q_a)}$.  We have that there are finite
strings $P_{(a,i)}$ and $Q_{(a,i)}$ so that $R_a = (P_{(a,0)},P_{(a,1)},\ldots,
P_{(a,n-1)})$ and $Q_a = (Q_{(a,0)},Q_{(a,1)},\ldots ,Q_{(a,n-1)})$.

Since $x$ is fixed by $f$, we must have that for each index $m\geq k$, $P_{(a,m)} = Q_{(a,m)}$.  Also, we see that for each index $m <k$, $P_{(a,m)} = A_m(w)^{s_m}$ and $Q_{(a,m)} = A_m(w)^{t_m}$, for non-negative integers $s_m$ and $t_m$.  Define for $f$,  the tuple $(s_0-t_0, s_1-t_1, \ldots , s_{k-1} - t_{k-1})\in Z^k$.  We will call this association $\hat{\sigma}_x:Fix_{(nV,x)}\to Z^k$.

If $g$, $h\in Fix_{(nV,x)}$, with
$[g]_x = [h]_x$, then $\hat{\sigma}_x(g) =
\hat{\sigma}_x(h)$, as otherwise the scaling of the two maps in
some dimension could not be the same in some small neighborhood of $x$.

In particular, the map $\hat{\sigma}_x$ induces a set map $\sigma_x:G_{(nV,x)}\to Z^k$.  The reader may now confirm that the map $\sigma_x$ is in fact an isomorphism of groups.
\end{proof}

\subsection{Germ isomorphisms}
For any integer $n$, $C^n$ is a locally compact, Hausdorff topological
space without isolated points (recall that $C^n$ is homeomorphic to
$C$ for any integer $n$).  Given any $x\in C^n$ with open neighborhood
$U$, and $y\in U$, there is a map in $nV$ which swaps a very small
$n$-rectangle around $x$ with a very small $n$-rectangle around $y$,
and is otherwise the identity.  In particular, the group $nV$ acts
locally densely on $C^n$.  A similar argument works for $\omega V$,
where our $n$-rectangles are given with long prefixes for the first
$N$ coordinates for some $N$, and are empty afterwords.  Thus, Rubin's
Theorem applies for $nV$ for any index $n$ in
$\left\{1,2,\ldots\right\}\cup\left\{\omega\right\}$.

We are finally in position to prove our main theorem.

Suppose $m$ and $n$ are valid indices, with $m<n$, and suppose
$\phi:nV \to mV$ is an isomorphism, with $\psi:C^n\to C^m$ the
homeomorphism induced by Rubin's Theorem.  Let $x$ be a point in $C^n$
with all $n$ coordinates rational, and let $y = \psi(x)$.  The map
$\psi$ induces an isomorphism
$\overline{\psi}:G_{(nV,x)}\to G_{(mV,y)}$.  But observe, the rank of
$G_{(nV,x)}$ is $n$, while the rank of $G_{(mV,y)}$ must be less than
or equal to $m$.  In particular, no such isomorphism $\overline{\psi}$ can
exist.

\end{document}